\newcommand{\RR}{\mathbb{R}} 
\newcommand{\CC}{\mathbb{C}} 
\newcommand{\HP}{\mathbf{H}}
\newcommand{\RE}{\mathrm{Re}\,}
\newcommand{\dil}{\mathcal{D}} 
\newcommand{\tran}{\mathcal{T}} 
\newcommand{\rot}{\mathcal{R}}
\newcommand{\Para}{\mathcal{P}}
\newcommand{\Hype}{\mathcal{H}}
\newcommand{\vW}{\mathcal{W}} 
\newcommand{\dc}{d^c}
\newcommand{\paren}[1]{\left(#1\right)}
\newcommand{\abs}[1]{\left\lvert#1\right\rvert}
\newcommand{\norm}[1]{\left\|#1\right\|}
\newcommand{\pd}[2]{\frac{\partial#1}{\partial#2}}
\newcommand{\spd}[3]{\frac{\partial^2#1}{\partial#2\partial#3}}
\newcommand{\pdl}[2]{\partial#1/\partial#2}
\newtheorem{theorem}{Theorem}[section]
\newtheorem{corollary}[theorem]{Corollary}
\newtheorem{lemma}[theorem]{Lemma}
\theoremstyle{definition}
\theoremstyle{remark}
\numberwithin{equation}{section}
\title[A certain K\"ahler potential of the Poincar\'e metric]{A certain K\"ahler potential of the Poincar\'e metric and its characterization}
\author{Young-Jun Choi, Kang-Hyurk Lee and Sungmin Yoo}
\address{Department of Mathematics, Pusan National University, 2, Busandaehak-ro 63beon-
gil, Geumjeong-gu, Busan 46241, Republic of Korea}
\email{youngjun.choi@pusan.ac.kr}
\address{Department of Mathematics and Research Institute of Natural Science, Gyeongsang National University, Jinju, Gyeongnam, 52828, Korea}
\email{nyawoo@gnu.ac.kr}
\address{Center for Geometry and Physics, Institute for Basic Science (IBS), Pohang 37673, Korea}
\email{sungmin@ibs.re.kr}
\subjclass[2010]{32Q15, 32M25, 30F45}
\keywords{The Poincar\'e metric, K\"ahler hyperbolicity}
\thanks{The research of first and second named authors  was supported by the National Research Foundation of Korea (NRF) grant funded by the Korea government (No. 2018R1C1B3005963, No. NRF-2019R1F1A1060891). The last named author was supported by IBS-R003-D1.}
\begin{document}

\maketitle

\begin{abstract}
We will show a rigidity of a K\"ahler potential of the Poincar\'e metric with a constant length differential.
\end{abstract}


\section{Introduction}

From the fundamental result of Donnelly-Fefferman~\cite{DF}, the vanishing of the space of $L^2$ harmonic $(p,q)$ forms has been an important research theme in the theory of complex domains. Since M.~Gromov (\cite{Gromov}, see also \cite{Donnelly1994}) suggested the concept of the K\"ahler hyperbolicity and gave a connection to the vanishing theorem, there have been many studies on the K\"ahler hyperbolicity of the Bergman metric, which is a fundamental K\"ahler structure of bounded pseudoconvex domains. The K\"ahler structure $\omega$ is \emph{K\"ahler hyperbolic} if there is a global $1$-form $\eta$ with $d\eta=\omega$ and $\sup\norm{\eta}_\omega<\infty$.

In \cite{Donnelly1997}, H. Donnelly showed the K\"ahler hyperbolicity of  Bergman metric on some class of  weakly pseudoconvex domains. For bounded homogeneous domain $D$ in $\CC^n$ and its Bergman metric $\omega_D$ especially, he used a classical result of Gindikin~\cite{Gindikin} to show that $\sup\norm{d\log K_D}_{\omega_D}<\infty$. Here $K_D$ is the Bergman kernel function of $D$ so $\log K_D$ is a canonical potential of $\omega_D$.

In their paper \cite{Kai-Ohsawa}, S.~Kai and T.~Ohsawa gave another approach. They proved that every bounded homogeneous domain  has a K\"ahler potential of the Bergman metric whose differential has a constant length.
\begin{theorem}[Kai-Ohsawa \cite{Kai-Ohsawa}]\label{thm:KO1}
For a bounded homogeneous domain $D$ in $\CC^n$, there exists a positive real valued function $\varphi$ on $D$ such that $\log\varphi$ is a K\"ahler potential of the Bergman metric $\omega_D$ and $\norm{d\log\varphi}_{\omega_D}$ is constant.
\end{theorem}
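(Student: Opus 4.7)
The plan is to reduce the problem to a Siegel domain of the second kind and then exploit the fact that a simply transitive solvable subgroup of the automorphism group acts by \emph{affine} maps, which collapses the transformation law of the Bergman kernel to a character of $S$.

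First I would invoke the Vinberg--Gindikin--Pyatetski-Shapiro realization: every bounded homogeneous domain $D$ is biholomorphic to a Siegel domain of the second kind $\tilde D$. The conclusion of Theorem~\ref{thm:KO1} is biholomorphically invariant: given a biholomorphism $f \colon D \to \tilde D$ and a function $\tilde\varphi$ that works on $\tilde D$, the function $\varphi := \tilde\varphi \circ f$ works on $D$, since $\omega_D = f^* \omega_{\tilde D}$ and the difference $\log K_D - \log\varphi = \log|\det f'|^2$ is pluriharmonic, so $\log\varphi$ remains a K\"ahler potential. Thus I may replace $D$ by its Siegel realization.

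On such a Siegel domain there is a split solvable subgroup $S \subset \Aut(D)$ acting simply transitively and, crucially, every element of $S$ is a holomorphic affine transformation. This $S$ is built as a semidirect product of the Heisenberg-type translation group (translations in the base coordinate together with the compensating Hermitian correction in the fiber coordinate) with the upper-triangular part of the linear automorphism group of the underlying open convex cone. Since each $s \in S$ is affine, $\det s'$ is a nonzero complex constant as a function on $D$.

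Finally I would take $\varphi := K_D$; by definition $\log K_D$ is a K\"ahler potential of $\omega_D$. The transformation law of the Bergman kernel,
\[
\log K_D \circ s \;=\; \log K_D \;-\; \log|\det s'|^2 \qquad \text{for every } s \in \Aut(D),
\]
collapses, for $s \in S$, to $\log K_D \circ s - \log K_D = \chi(s)$, where $\chi(s) := -\log|\det s'|^2$ is a constant depending only on $s$ and defines a homomorphism $\chi \colon S \to \RR$. Differentiating gives $L_s^*(d\log K_D) = d\log K_D$, so $d\log K_D$ is $S$-invariant. Since $\omega_D$ is also $S$-invariant and $S$ acts transitively, the scalar function $\norm{d\log K_D}_{\omega_D}^2$ is $S$-invariant and therefore constant. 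The only non-formal input is in the third paragraph: one must invoke the classical structure theory realizing $D$ as a Siegel domain carrying a simply-transitive affine solvable action. Once that is granted, the rest is a direct manipulation of the Bergman kernel's transformation law together with the automorphism invariance of $\omega_D$.
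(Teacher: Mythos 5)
Your proposal is correct and takes essentially the same route as the paper: reduce to the Siegel realization, take the Bergman kernel there, and use the transitivity of the affine automorphisms together with the transformation law $K\circ s\,\abs{\det s'}^2=K$ (with $\det s'$ constant since $s$ is affine) to conclude that $d\log K$ is invariant and hence of constant $\omega_D$-norm, then pull back by the biholomorphism. The only cosmetic difference is that you spell out the simply transitive split solvable affine group, where the paper cites affine homogeneity of Siegel domains directly.
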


 It can be obtained by the facts that each homogeneous domain is biholomorphic to a Siegel domain  (see \cite{VGP}) and a homogeneous Siegel domain is affine homogeneous (see \cite{KMO}).

More precisely, let us consider a bounded homogeneous domain $D$ in $\CC^n$ and a biholomorphism $F:D\to S$  for a Siegel domain $S$. For the Bergman kernel function $K_S$ of $S$ which is a canonical potential of the Bergman metric $\omega_S$, it is easy to show that $d\log K_S$ has a constant length with respect to $\omega_S$ from the affine homogeneity of $S$  (the group of affine holomorphic automorphisms acts transitively on $S$).  Since $\log K_S$ is a K\"ahler potential of $\omega_S$, the transformation formula of the Bergman kernel implies that the pullback $F^*\log K_S=\log K_S\circ F$ is also a K\"ahler potential of $\omega_D$. Using the fact that $F:(D,\omega_D)\to(S,\omega_S)$ is an isometry, we have $\norm{d(F^*\log K_S)}_{\omega_D}=\norm{d\log K_S}_{\omega_S}\circ F$.  As a function $\varphi$ in Theorem~\ref{thm:KO1}, we can choose the pullback $K_S\circ F$ of  the Bergman kernel function of the Siegel domain.

At this junction,  it is natural to ask:
\begin{quote}
\textit{If there is a K\"ahler potential $\log\varphi$ with a constant $\norm{d\log\varphi}_{\omega_D}$, is it always obtained by the pullback of the Bergman kernel function of the Siegel domain?}
\end{quote}
The aim of this paper is to discuss of this question in the $1$-dimensional case. 

The only bounded homogeneous domain in $\CC$ is the unit disc $\Delta=\{z\in\CC:\abs{z}<1\}$  up to the biholomorphic equivalence and  the $1$-dimensional correspondence of the Bergman metric, namely a holomorphically invariant hermitian structure, is only the Poincar\'e metric. Hence the main theorem as follows gives a positive answer to the question.
\begin{theorem}\label{thm:main thm rough}
Let $\omega_\Delta$ be the Poincar\'e metric of the unit disc $\Delta$. Suppose that there exists a positive real valued function $\varphi:\Delta\to\RR$ such that $\log\varphi$ is a K\"ahler potential of the Poincar\'e metric and $\norm{d\log\varphi}_{\omega_\Delta}$ is constant on $\Delta$. Then $\varphi$ is the pullback of the canonical potential on the half-plane $\HP=\{z\in\CC: \RE z<0\}$.
\end{theorem}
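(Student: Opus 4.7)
The plan is to transfer the problem to the half-plane $\HP$ via a Cayley biholomorphism, derive a clean algebraic identity for $\tilde h'$, and solve it by polarization combined with a pole-rigidity argument.

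\textbf{Setup.} Fix a Cayley biholomorphism $\Phi:\Delta\to\HP$ and denote by $u_\HP(w)=-2\log(-\RE w)$ the canonical potential of $\omega_\HP=\frac{2i\,dw\wedge d\bar w}{(w+\bar w)^2}$; one checks directly $\norm{du_\HP}_{\omega_\HP}^2\equiv 4$, reflecting the affine homogeneity of $\HP$. By pullback via $\Phi$, it is enough to classify K\"ahler potentials $\tilde v$ of $\omega_\HP$ on $\HP$ with $\norm{d\tilde v}_{\omega_\HP}^2$ constant and to show $e^{\tilde v}$ must be a constant multiple of $f^*K_\HP$ for some automorphism $f$ of $\HP$. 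Since $\HP$ is simply connected, $\tilde v-u_\HP$ is pluriharmonic, hence $\tilde v=u_\HP+\RE\tilde h$ for a holomorphic $\tilde h$ on $\HP$. A direct computation gives
\[
\norm{d\tilde v}_{\omega_\HP}^2=(w+\bar w)^2\abs{\tilde v_w}^2=\abs{\tfrac{w+\bar w}{2}\tilde h'(w)-2}^2,
\]
so the hypothesis $\norm{d\tilde v}^2\equiv c$ reduces to the key identity
\[
\bigl((w+\bar w)\tilde h'(w)-4\bigr)\bigl((w+\bar w)\overline{\tilde h'(w)}-4\bigr)=4c.
\]

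\textbf{Polarization.} Treating $u=w$ and $v=\bar w$ as independent complex variables and setting $\alpha(u):=\tilde h'(u)$, $\beta(v):=\overline{\tilde h'(\bar v)}$ (both holomorphic), analytic continuation from the real diagonal gives
\[
\bigl((u+v)\alpha(u)-4\bigr)\bigl((u+v)\beta(v)-4\bigr)=4c
\]
on an open set in $\CC^2$. In the nontrivial case $\tilde h'\not\equiv 0$, pick $v_0$ with $\beta(v_0)\ne 0$ and solve for $\alpha$ by partial fractions:
\[
\alpha(u)=\frac{4-c}{u+v_0}+\frac{c}{u+v_0-4/\beta(v_0)}.
\]
Since $\alpha$ is a single fixed holomorphic function of $u$, the pole at $u=-v_0$---whose location varies with the free parameter $v_0$---cannot actually occur, so its residue $4-c$ must vanish, forcing $c=4$. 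The remaining simple pole at $u_0:=4/\beta(v_0)-v_0$ must then be independent of $v_0$, yielding $\alpha(u)=4/(u-u_0)$ and $\beta(v)=4/(v+u_0)$. The reality relation $\beta(v)=\overline{\alpha(\bar v)}$ finally forces $u_0+\bar u_0=0$, i.e.\ $u_0=ic'$ with $c'\in\RR$.

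\textbf{Conclusion.} Integrating, $\tilde h(w)=4\log(w-ic')+\text{const}$, whence
\[
e^{\tilde v}=\text{const}\cdot\frac{|w-ic'|^4}{(-\RE w)^2}.
\]
The M\"obius map $f(w)=1/(w-ic')$ is an automorphism of $\HP$ (its unique pole $ic'$ lies on $\partial\HP$), and the transformation formula for the Bergman kernel yields $f^*K_\HP(w)=|w-ic'|^4/(\RE w)^2$ up to constant. Hence $e^{\tilde v}$ is a constant multiple of $f^*K_\HP$, and $\varphi=e^{\tilde v}\circ\Phi$ is a constant multiple of $(f\circ\Phi)^*K_\HP$---a pullback of the canonical potential on $\HP$. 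The trivial case $\tilde h'\equiv 0$ corresponds to $f=\mathrm{id}$. The main obstacle is the polarization/pole-rigidity argument: one must rigorously extend the real-analytic identity to two independent complex variables and then run the moving-pole argument to pin down $\alpha$; once this is in place, the remaining steps are routine calculation.
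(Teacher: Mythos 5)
Your proposal is correct, and it takes a genuinely different route from the paper. The paper's proof is dynamical: from the harmonicity of $\varphi^{-1/2}$ it constructs a nowhere vanishing \emph{complete} holomorphic vector field $\vW$ with $(\RE\vW)\varphi\equiv 0$ (Theorem~\ref{thm:existence}), then invokes the classification of one-parameter subgroups of $\Aut(\Delta)$, rules out the hyperbolic generator (Lemma~\ref{lem:hyperbolic}) and identifies the parabolic case with the model (Lemma~\ref{lem:parabolic}). You instead transfer everything to $\HP$, where the two hypotheses collapse into the single algebraic identity $\abs{\tfrac{w+\bar w}{2}\tilde h'(w)-2}^2\equiv c$, and you solve it by polarization. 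Both arguments share the decomposition ``potential $=$ canonical potential $+\ \RE(\text{holomorphic})$'' and both end by pinning down the derivative of the holomorphic part as a rational function with a boundary pole; but the paper uses the tangency to a one-parameter group to fix the \emph{form} of $f'$ and only then feeds in the norm condition, whereas you extract everything from the norm condition alone. Your route is more elementary and self-contained: it needs neither the completeness/flow arguments nor the Lie-theoretic classification of $\Aut(\Delta)$, and it recovers $c=4$ as a byproduct rather than citing Kai--Ohsawa (Theorem~\ref{thm:KO2}); what it gives up is the intermediate Theorem~\ref{thm:existence}, which the paper states for general Riemann surfaces and which has independent interest. Two small points to tighten: (i) your $\omega_\HP$ should be $i\,dw\wedge d\bar w/(w+\bar w)^2$ (no factor $2$) to match $\norm{d\tilde v}^2_{\omega_\HP}=(w+\bar w)^2\abs{\tilde v_w}^2$, though your subsequent formulas are the correct ones; (ii) in the pole-rigidity step, the candidate pole $u=-v_0$ satisfies $\RE(-v_0)>0$ and so lies \emph{outside} $\HP$, so you cannot conclude directly from holomorphy of $\alpha$ there --- you should instead note that the rational expression is the unique meromorphic continuation of $\alpha$, hence independent of $v_0$, and compare the pole sets for two different admissible values of $v_0$ (your phrase ``a single fixed holomorphic function'' gestures at exactly this, but it deserves a sentence).
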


Note that $1$-dimensional Siegel domain is just the half-plane. We will introduce the Poincar\'e metric and related notions in Section~\ref{sec:2}. As an application of the main theorem, we can characterize the half-plane by the canonical potential.

\begin{corollary}\label{cor:main cor rough}
Let $D$ be a simply connected, proper domain in $\CC$ with a Poincar\'e metric $\omega_D=i\lambda dz \wedge d\bar z$. If $\norm{d\log \lambda}_{\omega_D}$ is constant on $D$, then $D$ is affine equivalent to the half-plane $\HP=\{z\in\CC: \RE z<0\}$.
\end{corollary}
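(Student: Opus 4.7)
The plan is to reduce the statement to Theorem~\ref{thm:main thm rough} via the Riemann mapping, and then unwind the rigidity. Since $D$ is simply connected and a proper subdomain of $\CC$, there exists a biholomorphism $F\colon\Delta\to D$, and $F$ is an isometry of the Poincar\'e metrics. Writing $\omega_\Delta = i\mu\,dw\wedge d\bar w$ in the disc coordinate and setting $\varphi := \lambda\circ F$, the identity $F^*\omega_D = \omega_\Delta$ reads $\mu = \varphi\,|F'|^2$, so $\log\varphi = \log\mu - \log|F'|^2$. Because $\log|F'|^2$ is pluriharmonic, $\log\varphi$ and $\log\mu$ differ by a pluriharmonic function, and hence $\log\varphi$ is a K\"ahler potential of $\omega_\Delta$. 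Moreover $d\log\varphi = F^*d\log\lambda$, and since $F$ is an isometry, $\norm{d\log\varphi}_{\omega_\Delta} = \norm{d\log\lambda}_{\omega_D}\circ F$ is constant on $\Delta$ by hypothesis. Thus $\varphi$ satisfies the hypotheses of Theorem~\ref{thm:main thm rough}.

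Applying that theorem, there is a biholomorphism $G\colon\Delta\to\HP$ such that $\varphi = G^*K_\HP$, where $K_\HP$ denotes the canonical (Bergman) potential of $\HP$. Setting $H := G\circ F^{-1}\colon D\to\HP$, which is again a biholomorphism, this rewrites as $\lambda = K_\HP\circ H$. On the other hand, because $H$ is itself an isometry from $\omega_D$ to $\omega_\HP = i\mu_\HP\,d\zeta\wedge d\bar\zeta$, we also have $\lambda = (\mu_\HP\circ H)\,|H'|^2$. The remaining task is to show that these two identities force $H$ to be affine.

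For this, I would invoke the fact that on $\HP$ both $\log K_\HP$ and $\log\mu_\HP$ are K\"ahler potentials of the same metric $\omega_\HP$. The affine group $\{\zeta\mapsto a\zeta+b : a>0,\ b\in i\RR\}$ acts transitively on $\HP$ and transforms both $K_\HP$ and $\mu_\HP$ by the common factor $1/a^2$, so the ratio $K_\HP/\mu_\HP$ is a positive constant: $K_\HP = C\mu_\HP$ for some $C>0$ (this can equally be checked from the explicit formulas $K_\HP(\zeta)\propto 1/(\RE\zeta)^2 \propto \mu_\HP(\zeta)$). Comparing the two expressions for $\lambda$ then yields $|H'|^2\equiv C$; a holomorphic function of constant modulus is itself constant, so $H$ is affine, and $D = H^{-1}(\HP)$ is affine equivalent to $\HP$. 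The only step carrying genuine content is this last identification $K_\HP = C\mu_\HP$ via the affine homogeneity of $\HP$; everything else is a formal transport of the problem through $F$, $G$, and $H$.
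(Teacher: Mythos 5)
Your argument is correct and follows essentially the same route as the paper's proof: uniformize $D$ by the disc, transport the hypotheses through the Riemann map to obtain a potential satisfying Theorem~\ref{thm:main thm rough}, and then identify the Riemann map as a Cayley transform composed with an affine map. The only difference is cosmetic and lies in the last step --- the paper integrates $G'=e^{i\theta'}/\sqrt{r}(1+z)^2$ explicitly on the disc and recognizes the inverse Cayley transform, while you transport to $\HP$ and use $K_\HP=C\mu_\HP$ to conclude $\abs{H'}$ is constant --- and both versions ultimately rest on the fact that a holomorphic function of constant modulus on a connected domain is constant.
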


In Section~\ref{sec:2}, we will introduce notions and concrete version of the main theorem. Then we will study the existence of a nowhere vanishing complete holomorphic vector field which is tangent to a potential whose differential is of constant length (Section~\ref{sec:3}). Using relations between complete holomorphic vector fields and model potentials in Section~\ref{sec:4}, we will prove theorems.


\section{Background materials}\label{sec:2}

Let $X$ be a Riemann surface. The Poincar\'e metric of $X$ is a complete hermitian metric with a constant Gaussian curvature, $-4$. The Poincar\'e metric exists on $X$ if and only if $X$ is a quotient of the unit disc. If $X$ is covered by $\Delta$, the Poincar\'e metric can be induced by the covering map $\pi:\Delta\to X$ and it is uniquely determined. Throughout of this paper, the K\"ahler form of the Poincar\'e metric of $X$, denoted by $\omega_X$,  stands for the metric also.  When $\omega_X=i\lambda dz\wedge d\bar z$ in the local holomorphic coordinate function $z$, the curvature can be written by
\begin{equation*}
\kappa=-\frac{2}{\lambda}\spd{}{z}{\bar z} \log \lambda \;.
\end{equation*}
So the curvature condition $\kappa\equiv -4$ implies that
\begin{equation*}
\spd{}{z}{\bar z} \log \lambda = 2\lambda \;,
\end{equation*}
equivalently
\begin{equation*}
d\dc\log\lambda = 2\omega_X \;,
\end{equation*}
where $d^c=\frac{i}{2}(\overline\partial-\partial)$.
That means the function $\frac{1}{2}\log\lambda$ is a local K\"ahler potential of $\omega_X$. Any other local potential of $\omega_X$ is always of the form $\frac{1}{2}\log\lambda+\log\abs{f}^2$ where $f$ is a local holomorphic function on the domain of $z$.
We call $\frac{1}{2}\log\lambda$ the \emph{canonical potential} with respect to the coordinate function $z$.  For a domain $D$ in $\CC$, the canonical potential of $D$ means the canonical potential with respect to the standard coordinate function of $\CC$. 

Let us consider the Poincar\'e metric $\omega_\Delta$ of the unit disc $\Delta$:
\begin{equation*}
\omega_\Delta=i\frac{1}{\paren{1-\abs{z}^2}^2}dz\wedge d\bar z = i\lambda_\Delta dz\wedge d\bar z \;.
\end{equation*}
The canonical potential $\lambda_\Delta$ satisfies
\begin{equation*}
\norm{d\log\lambda_\Delta}_{\omega_\Delta}^2=\norm{\pd{\log\lambda_\Delta}{z}dz+\pd{\log\lambda_\Delta}{\bar z}d\bar z}_{\omega_\Delta}^2
=\pd{\log\lambda_\Delta}{z}\pd{\log\lambda_\Delta}{\bar z}\frac{1}{\lambda_\Delta}=4\abs{z}^2 \;,
\end{equation*}
so does not have a constant length.
By the same way of Kai-Ohsawa~\cite{Kai-Ohsawa}, we can get a model for $\varphi$ in Theorem~\ref{thm:KO1} for the unit disc,
\begin{equation}\label{eqn:model}
\varphi_\theta(z)=\frac{\abs{1+e^{i\theta}z}^4}{\paren{1-\abs{z}^2}^2} \quad\text{for $\theta\in\RR$}
\end{equation}
as a pullback of the canonical potential $\lambda_\HP=1/\abs{\RE w}^2$ on the left-half plane $\HP=\{w:\RE w<0\}$ by the Cayley transforms (see \eqref{eqn:CT} for instance). The term $\theta$ depends on the choice of the Cayley transform. Since $\log\varphi_\theta=\log\lambda_\Delta+\log\abs{1+e^{i\theta}z}^4$, the function $\frac{1}{2}\log\varphi_\theta$  is a K\"ahler potential. Moreover
\begin{equation*}
\norm{d\log\varphi_\theta}_{\omega_\Delta}^2 \equiv 4 \;.
\end{equation*}
At this moment, we introduce a significant result of Kai-Ohsawa.
\begin{theorem}[Kai-Ohsawa \cite{Kai-Ohsawa}]\label{thm:KO2}
For a bounded homogeneous domain $D$ in $\CC^n$, suppose that there is a K\"ahler potential $\log\psi$ of the Bergman metric $\omega_D$ with a constant $\norm{d\log\psi}_{\omega_D}$, then $\norm{d\log\psi}_{\omega_D}=\norm{d\log\varphi}_{\omega_D}$ where $\varphi$ is as in Theorem~\ref{thm:KO1}.
\end{theorem}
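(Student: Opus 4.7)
The plan is to work on a Siegel realization of $D$ and exploit the affine-homogeneity of the Bergman potential.

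\emph{Reduction.} By \cite{VGP} and \cite{KMO}, there is a biholomorphism $\Phi:D\to S$ onto a homogeneous Siegel domain whose affine automorphism group $A$ acts transitively; $\Phi$ is an isometry of Bergman metrics, so it suffices to prove the statement on $(S,\omega_S)$. The Bergman transformation formula $(\log K_S)\circ g = \log K_S - \log\abs{J_g}^2$, combined with $J_g$ being a nonzero constant (as $g\in A$ is affine), shows that $d\log K_S$ is $A$-invariant, hence by transitivity of $A$, $\norm{d\log K_S}^2_{\omega_S}$ is a constant $C_0^2$ on $S$.

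\emph{Key identity.} Since $S$ is simply connected, $\log\psi - \log K_S = \log\abs{F}^2$ for some nowhere-vanishing holomorphic $F:S\to\CC^*$. With $C^2 := \norm{d\log\psi}^2_{\omega_S}$ constant by hypothesis, the identity
\begin{equation*}
\norm{d\log\abs{F}^2}_{\omega_S}^2 + 2\RE\ip{d\log K_S,\, d\log\abs{F}^2}_{\omega_S} \equiv C^2 - C_0^2 =: \alpha
\end{equation*}
holds on all of $S$. Because $\log K_S$ is $A$-invariant up to constants and $\omega_S$ is $A$-invariant, the same identity with the same $\alpha$ is satisfied by every translate $F\circ g$, $g\in A$; equivalently, every $\psi_g := \abs{F\circ g}^2 K_S$ is a K\"ahler potential of $\omega_S$ whose differential has the same constant length $C$.

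\emph{Conclusion and obstacle.} I would show $\alpha=0$ by exhibiting within the orbit $\{F\circ g:g\in A\}$ a limit that is a constant function. Completing the square gives $\norm{d\log\abs{F}^2 + d\log K_S}^2_{\omega_S}\equiv C^2$, hence the uniform bound $\norm{d\log\abs{F\circ g}^2}_{\omega_S}\leq C + C_0$; after the normalization $\tilde F_n := (F\circ g_n)/F(g_n(p_0))$ at a base point $p_0\in S$, the family $\{\tilde F_n\}$ is normal by Montel's theorem. Choosing $g_n$ to be contracting dilations of the underlying Siegel cone, Gindikin's explicit formula for $K_S$ pins $\tilde F_n\to 1$ locally uniformly, so passing to the limit in the identity yields $\alpha=0$. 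The main obstacle is precisely this limit step: it relies on the explicit geometry of homogeneous Siegel domains rather than on purely formal Lie-algebraic arguments. A limit-free alternative is to differentiate the identity along one-parameter subgroups of $A$---using that $Z\log K_S$ is a constant and $Z\log\abs{F}^2 = 2\RE(ZF/F)$ is pluriharmonic for $Z\in\mathrm{Lie}(A)$---and to conclude by polarization in the holomorphic quantities $ZF/F$, $Z'ZF/F$, and so on.
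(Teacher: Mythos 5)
First, a point of comparison that matters: the paper contains no proof of this statement. Theorem~\ref{thm:KO2} is quoted from Kai--Ohsawa \cite{Kai-Ohsawa} and used as a black box, solely to normalize $c=4$ before Theorem~\ref{thm:main thm}. So your attempt cannot be matched against an argument in the text; it has to stand on its own, and as written it does not.

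The reduction is fine as far as it goes: pass to a Siegel realization, use affine homogeneity to make $\norm{d\log K_S}_{\omega_S}=C_0$ constant, write $\log\psi=\log K_S+\log\abs{F}^2$ with $F$ holomorphic and nowhere vanishing, and reduce to showing $\alpha=C^2-C_0^2=0$. The gap is exactly where you flag it, and it is fatal: nothing you have established forces the normalized translates $\tilde F_n=(F\circ g_n)/F(g_n(p_0))$ to converge to a constant. The bound $\norm{d\log\abs{F\circ g}^2}_{\omega_S}\le C+C_0$ gives normality but is perfectly compatible with non-constant limits. For instance, on $\HP$ the functions $F(w)=w^a$ ($a\in\RR$, a fixed branch) are nowhere vanishing with $\norm{d\log\abs{F}^2}_{\omega_\HP}^2=4a^2(\RE w)^2/\abs{w}^2\le 4a^2$ bounded, and they are exactly invariant, after your normalization, under the contracting dilations $w\mapsto e^{-2n}w$: every $\tilde F_n$ equals the same non-constant function $(w/p_0)^a$. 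These $F$ do not satisfy the full constant-length identity --- which is precisely the point: ruling them out requires using that identity, not merely the gradient bound, and Gindikin's formula, which describes $K_S$ and says nothing about $F$, cannot ``pin $\tilde F_n\to 1$.'' The proposed limit-free alternative (differentiate along $\mathrm{Lie}(A)$ and ``conclude by polarization'') is likewise only a gesture. For what a completed argument of that second kind looks like in the one-dimensional model, see Lemmas~\ref{lem:hyperbolic} and~\ref{lem:parabolic}: invariance under a one-parameter subgroup forces an explicit form for $f'=F'/F$, which is then substituted into the length identity, and the constant is extracted by applying $\partial\bar\partial$ and evaluating at a point. A step of that concreteness is what your proposal is missing.
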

Suppose that  a positively real valued $\varphi$ on $\Delta$ satisfies that $d\dc\log\varphi=2\omega_\Delta$ and $\norm{d\log\varphi}_{\omega_\Delta}^2\equiv c$ for some constant $c$. Theorem~\ref{thm:KO2} implies that $c$ must be $4$. Therefore, we can rewrite Theorem~\ref{thm:main thm rough} by
\begin{theorem}\label{thm:main thm}
If there exists a function $\varphi:\Delta\to\RR$ satisfying
\begin{equation}\label{eqn:basic condition}
d\dc\log\varphi=2\omega_\Delta \quad\text{and}\quad
\norm{d\log\varphi}_{\omega_\Delta}^2\equiv4
\;.
\end{equation}
Then $\varphi=r\varphi_\theta$ as in \eqref{eqn:model} for some $r>0$ and $\theta\in\RR$.
\end{theorem}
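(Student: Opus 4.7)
The plan is to reduce the problem to a rigidity statement for a holomorphic function of constant modulus on $\Delta$.

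Since $d\dc\log\varphi = 2\omega_\Delta = d\dc\log\lambda_\Delta$, the difference $\log\varphi - \log\lambda_\Delta$ is pluriharmonic on the simply connected disc and equals $\log\abs{h}^2$ for some nowhere-vanishing holomorphic $h\colon\Delta\to\CC$, so $\varphi = \lambda_\Delta\abs{h}^2$. A short computation of $\partial_z\log\varphi$ using the formula $\norm{du}_{\omega_\Delta}^2 = \abs{u_z}^2/\lambda_\Delta$ for real $u$ translates the constant-length hypothesis into
\begin{equation*}
\abs{2\bar z + (1-\abs{z}^2)\,h'(z)/h(z)} \equiv 2 \quad \text{on } \Delta.
\end{equation*}
Setting $\psi := h'/(2h)$, which is holomorphic on $\Delta$, and expanding the square of the identity above yields, after division by $1-\abs{z}^2$,
\begin{equation*}
(1-\abs{z}^2)\abs{\psi}^2 + 2\RE(z\psi) = 1.
\end{equation*}
This equation forces $\psi$ to be nowhere zero (a zero would give $0=1$), so $\chi := 1/\psi$ is a holomorphic function on $\Delta$ without zeros.

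Multiplying the last identity by $\abs{\chi}^2 = \chi\bar\chi$ and rearranging (the cross term becomes $z\bar\chi + \bar z\chi$) collapses it to the remarkably clean relation
\begin{equation*}
\abs{\chi(z) - z}^2 \equiv 1 \quad \text{on } \Delta.
\end{equation*}
This is the decisive step: because $\chi(z) - z$ is holomorphic and has constant modulus, the open mapping theorem forces it to be a constant $c$ with $\abs{c} = 1$. Unwinding gives $\psi(z) = 1/(z+c)$, then $h'/h = 2/(z+c)$, then $h(z) = K(z+c)^2$ for some nonzero constant $K$. Since $\abs{z+c}^4 = \abs{1 + \bar c z}^4$ (using $\abs{c}=1$), one concludes that $\varphi = \abs{K}^2 \varphi_\theta$ with $e^{i\theta} = \bar c$, which is the desired form.

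The only conceptual obstacle in this route is spotting the substitution $\chi = 1/\psi$ that converts the bilinear equation for $\psi$ into the rigid modulus-one condition for $\chi - z$; once this transformation is in place, the conclusion is immediate from the basic fact that a holomorphic function of constant absolute value is constant.
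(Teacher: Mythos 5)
Your proof is correct, and it takes a genuinely different route from the paper. The paper's argument is structural: it first constructs a nowhere vanishing complete holomorphic vector field $\vW$ with $(\RE\vW)\varphi\equiv 0$ (Theorem~\ref{thm:existence}, which works on any Riemann surface carrying such a potential), then uses the Lie-group structure of $\Aut(\Delta)$ to write $\vW$ as a combination of the hyperbolic and parabolic fields $\Hype$ and $\Para$ vanishing at a common boundary point, and finally rules out the hyperbolic component (Lemma~\ref{lem:hyperbolic}) and pins down $\varphi$ from the parabolic one (Lemma~\ref{lem:parabolic}). You instead reduce everything to a single scalar identity: writing $\varphi=\lambda_\Delta\abs{h}^2$, the constant-length hypothesis becomes $(1-\abs{z}^2)\abs{\psi}^2+2\RE(z\psi)=1$ for $\psi=h'/(2h)$, and the inversion $\chi=1/\psi$ (legitimate since the identity forces $\psi$ to be zero-free) collapses this to $\abs{\chi(z)-z}^2\equiv 1$, whence $\chi(z)=z+c$ with $\abs{c}=1$ by the open mapping theorem; integrating back gives $h=K(z+c)^2$ and $\varphi=\abs{K}^2\varphi_\theta$. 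I checked the computations — the norm formula $\norm{du}_{\omega_\Delta}^2=\abs{u_z}^2/\lambda_\Delta$, the expansion and division by $1-\abs{z}^2$, the multiplication by $\chi\bar\chi$, and the identity $\abs{z+c}=\abs{1+\bar c z}$ for $\abs{c}=1$ — and they all go through; note also that the zero of $h$ at $z=-c$ lies on $\partial\Delta$, consistent with $h$ being zero-free on $\Delta$. Your approach is shorter and entirely elementary, dispensing with Sections~\ref{sec:3} and \ref{sec:4}; what the paper's approach buys is a framework (complete vector fields tangent to the potential, dichotomy of one-parameter subgroups) that mirrors the Siegel-domain picture motivating the higher-dimensional question, whereas your substitution $\chi=1/\psi$ is a one-variable trick that would not obviously generalize.
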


Corollary~\ref{cor:main cor rough} can be also written by

\begin{corollary}\label{cor:main cor}
Let $D$ be a simply connected, proper domain in $\CC$ with a Poincar\'e metric $\omega_D=i\lambda dz \wedge d\bar z$. If $\norm{d\log \lambda}_{\omega_D}^2\equiv 4$, then $D$ is affine equivalent to the half-plane $\HP=\{z\in\CC: \RE z<0\}$.
\end{corollary}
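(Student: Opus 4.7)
The strategy is to reduce Corollary~\ref{cor:main cor} to Theorem~\ref{thm:main thm} through the Riemann mapping theorem, and then upgrade the resulting biholomorphism between $D$ and $\HP$ to an affine isomorphism by showing that its derivative has constant modulus.

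\textbf{Step 1: Transport the hypothesis to the disc.} Since $D$ is simply connected and properly contained in $\CC$, the Riemann mapping theorem provides a biholomorphism $F\colon\Delta\to D$. This map is an isometry between the Poincar\'e metrics, so writing $\varphi:=\lambda\circ F$ on $\Delta$, I expect the transformation $F^*(d\dc\log\lambda)=d\dc\log\varphi$ and $\|d\log\varphi\|_{\omega_\Delta}=\|d\log\lambda\|_{\omega_D}\circ F$ to give
\begin{equation*}
d\dc\log\varphi=2\omega_\Delta,\qquad \norm{d\log\varphi}_{\omega_\Delta}^2\equiv 4\,,
\end{equation*}
exactly the hypotheses \eqref{eqn:basic condition} of Theorem~\ref{thm:main thm}. (The first condition does \emph{not} follow from the biholomorphic invariance of $\log\lambda$, which fails; rather, it follows because $\frac12\log\lambda$ is a local K\"ahler potential and $F$ is holomorphic, so $d\dc$ commutes with pullback.)

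\textbf{Step 2: Apply the main theorem.} Theorem~\ref{thm:main thm} then gives $r>0$ and $\theta\in\RR$ such that $\varphi=r\varphi_\theta$ on $\Delta$. By the description in Section~\ref{sec:2}, $\varphi_\theta=\lambda_\HP\circ C_\theta$ where $C_\theta\colon\Delta\to\HP$ is a Cayley transform. Hence
\begin{equation*}
\lambda\circ F\;=\;r\,\lambda_\HP\circ C_\theta\qquad\text{on }\Delta.
\end{equation*}

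\textbf{Step 3: Produce an affine equivalence $D\to\HP$.} Consider the biholomorphism $G:=C_\theta\circ F^{-1}\colon D\to\HP$. Pulling back $\omega_\HP=i\lambda_\HP\,dw\wedge d\bar w$ via $G$ gives the Poincar\'e metric of $D$, so $\lambda=(\lambda_\HP\circ G)\cdot\abs{G'}^2$. Evaluating at $F(z)$ and using $G\circ F=C_\theta$, I compare with the equality of Step~2 to conclude
\begin{equation*}
\abs{G'}^2\equiv r\qquad\text{on }D.
\end{equation*}
Since $G'$ is a holomorphic function on a connected domain with constant modulus, it is constant, so $G$ is an affine map and $D=G^{-1}(\HP)$ is affine equivalent to $\HP$, as required.

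\textbf{Anticipated obstacle.} There is essentially no genuine obstacle once Theorem~\ref{thm:main thm} is available: the whole argument is a transplantation of the rigidity on $\Delta$ to $D$ via the Riemann map. The only place requiring care is the bookkeeping of conformal factors (the difference between pulling back the metric, pulling back its coefficient $\lambda$, and pulling back the canonical potential $\tfrac12\log\lambda$); a sign or a factor of $\abs{F'}^2$ slipped in the wrong place would break the final deduction that $\abs{G'}$ is constant.
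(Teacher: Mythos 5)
Your proposal is correct and follows essentially the same route as the paper: transport $\lambda$ to the disc via a Riemann map, apply Theorem~\ref{thm:main thm}, and conclude that the composite $D\to\HP$ is affine. The only cosmetic difference is at the end, where the paper integrates $G'=e^{i\theta'}/(\sqrt{r}(1+z)^2)$ explicitly (after first normalizing $\theta=0$ using the vector field from Theorem~\ref{thm:existence}), whereas you handle general $\theta$ directly and deduce affineness from the constancy of $\abs{G'}$ -- both are valid and equivalent.
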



\section{Existence of nowhere vanishing complete holomorphic vector field}\label{sec:3}

In this section, we will study an existence of a complete holomorphic tangent vector field on a Riemann surface $X$ which admits a K\"ahler potential of the Poincar\'e metric with a constant length differential. 

By a holomorphic tangent vector field of a Riemann surface $X$, we means a holomorphic section $\vW$ to the holomorphic tangent bundle $T^{1,0}X$. If the corresponding real tangent vector field $\RE \vW=\vW+\overline{\vW}$ is complete, we also say $\vW$ is complete. Thus the complete holomorphic tangent vector field generates a $1$-parameter family of holomorphic transformations.

In this section, we will show that
\begin{theorem}\label{thm:existence}
Let $X$ be a Riemann surface with the Poincar\'e metric $\omega_X$. If there is a function $\varphi:X\to\RR$ with 
\begin{equation}\label{eqn:condition on surface}
d\dc\log\varphi = 2\omega_X 
\quad\text{and}\quad
\norm{d\log\varphi}_{\omega_X}^2\equiv 4
\end{equation}
then there is a nowhere vanishing complete holomorphic vector field $\vW$ such that $(\RE \vW)\varphi\equiv 0$.
\end{theorem}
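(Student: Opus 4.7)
Work in a local holomorphic coordinate $z$ on $X$ with $\omega_X = i\lambda\,dz\wedge d\bar z$, and set $u = \log\varphi$. The first condition in \eqref{eqn:condition on surface} translates to $u_{z\bar z} = 2\lambda$, while the second gives $u_z u_{\bar z} = 4\lambda$ (as in the computation of $\norm{d\log\lambda_\Delta}_{\omega_\Delta}^2$ in Section~\ref{sec:2}). Combining them yields the single key identity
\[
u_z u_{\bar z} \;=\; 2\,u_{z\bar z},
\]
and since $u_{z\bar z}=2\lambda>0$, this forces $u_z$ to vanish nowhere on $X$; equivalently, the globally defined $(1,0)$-form $\partial u$ is nowhere zero.

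The candidate vector field is the one determined intrinsically by $(\partial u)(\vW) = i\sqrt{\varphi}$, which in a chart reads
\[
\vW \;=\; \frac{i\sqrt{\varphi}}{u_z}\,\pd{}{z}.
\]
Since $\partial u = u_z\,dz$ is a nowhere vanishing section of $\Omega^{1,0}X$, its dual $u_z^{-1}\partial/\partial z$ is a chart-independent, nowhere vanishing section of $T^{1,0}X$; scaling by the global positive function $\sqrt\varphi$ preserves both properties. To see that $\vW$ is holomorphic, I would differentiate $W = i\sqrt\varphi/u_z$ in $\bar z$: using $\partial_{\bar z}\sqrt\varphi = \tfrac{1}{2}\sqrt\varphi\,u_{\bar z}$ (as $\sqrt\varphi=e^{u/2}$), one obtains
\[
W_{\bar z} \;=\; \frac{i\sqrt\varphi}{u_z^{2}}\paren{\frac{u_z u_{\bar z}}{2}-u_{z\bar z}},
\]
which vanishes exactly by the identity above. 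Verifying $(\RE\vW)\varphi\equiv 0$ is then immediate: $\RE\vW=W\partial_z+\bar W\partial_{\bar z}$ applied to $\varphi$ gives $\varphi(Wu_z+\overline{Wu_z})=2\varphi\,\RE(i\sqrt\varphi)=0$, since $Wu_z=i\sqrt\varphi$ is purely imaginary.

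The remaining and, I expect, main step is completeness of $\RE\vW$. The plan is to leverage completeness of the Poincar\'e metric. A direct computation of the Riemannian norm gives
\[
\norm{\RE\vW}_{\omega_X}^{2} \;=\; 2\lambda\,\abs{W}^{2} \;=\; \frac{2\lambda\,\varphi}{\abs{u_z}^{2}} \;=\; \frac{\varphi}{2},
\]
using $\abs{u_z}^{2}=4\lambda$. Since $\RE\vW$ annihilates $\varphi$, the function $\varphi$ is constant along each integral curve of $\RE\vW$, hence so is $\norm{\RE\vW}_{\omega_X}$. Every integral curve therefore has constant finite speed with respect to the complete Poincar\'e metric, so it remains in a precompact ball on any bounded time interval; standard ODE theory then extends it to all real times, producing the claimed nowhere vanishing complete holomorphic vector field $\vW$.
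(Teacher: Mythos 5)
Your proof is correct and follows essentially the same route as the paper: your vector field is the paper's $\vW$ up to a real constant factor, your $\bar\partial$-computation of $W$ is equivalent to the paper's observation that $\varphi^{-1/2}$ is harmonic, and both completeness arguments rest on the same two facts, namely that $\norm{\RE\vW}_{\omega_X}$ is a function of $\varphi$ alone and that $\varphi$ is constant along integral curves. The only cosmetic difference is in the last step, where the paper reparametrizes the integral curves of the unit-length field $\varphi^{-1/2}\RE\vW$ while you invoke Hopf--Rinow and the escape lemma for a constant-speed curve; both are valid.
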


\begin{proof}
Take a local holomorphic coordinate function $z$ and let $\omega_X=i\lambda dz\wedge d\bar z$. The equation~\eqref{eqn:condition on surface} can be written by
\begin{equation*}
\paren{\log\varphi}_{z\bar z} = 2\lambda
\quad\text{and}\quad
\paren{\log\varphi}_z \paren{\log\varphi}_{\bar z} = 4\lambda
\end{equation*}
Here, $\paren{\log\varphi}_z=\pd{}{z}\log\varphi$, $\paren{\log\varphi}_{\bar z}=\pd{}{\bar z}\log\varphi$ and $\paren{\log\varphi}_{z\bar z}=\spd{}{z}{\bar z}\log\varphi$. This implies that
\begin{align*}
\paren{\varphi^{-1/2}}_{z}
	&=  
	\pd{}{z}\varphi^{-1/2}
	=
	-\frac{1}{2}\varphi^{-1/2} \paren{\log\varphi}_{z} \; ;
\\
\paren{\varphi^{-1/2}}_{z\bar z}
	&=  
	\spd{}{z}{\bar z}\varphi^{-1/2}
	=
	-\frac{1}{2}\varphi^{-1/2} \paren{\log\varphi}_{z\bar z}
	+\frac{1}{4}\varphi^{-1/2}\paren{\log\varphi}_z \paren{\log\varphi}_{\bar z}
	\\
	&=
	-\frac{1}{2}\varphi^{-1/2} \paren{
		\paren{\log\varphi}_{z\bar z}
		-\frac{1}{2}\paren{\log\varphi}_z \paren{\log\varphi}_{\bar z}
		}
	\\
	&= 0 \; .
\end{align*}
Thus we have that the function $\varphi^{-1/2}$ is harmonic so $\paren{\varphi^{-1/2}}_{z}$ is holomorphic.

Let us consider a local holomorphic vector field,
\begin{equation*}
\vW=\frac{i}{\paren{\varphi^{-1/2}}_z}\pd{}{z} 
=\frac{-2i\varphi^{3/2}}{\varphi_z}\pd{}{z} 
=\frac{-2i\varphi^{1/2}}{\paren{\log\varphi}_z}\pd{}{z} 
\;.
\end{equation*}
In any other local holomorphic coordinate function $w$,  we have
\begin{equation*}
\vW=\frac{i}{\paren{\varphi^{-1/2}}_z}\pd{}{z} 
=\frac{i}{\paren{\varphi^{-1/2}}_w\pd{w}{z}}\pd{w}{z}\pd{}{w} 
=\frac{i}{\paren{\varphi^{-1/2}}_w}\pd{}{w} 
\;.
\end{equation*}
so $W$ is globally defined on $X$. Now we will show that $\vW$ satisfies conditions in the theorem.

Since
\begin{equation*}
\norm{\varphi^{-1/2}\vW}_{\omega_X}^2
=\norm{\frac{-2i}{\paren{\log\varphi}_{z}}\pd{}{z} }_{\omega_X}^2
=\frac{4\lambda}{\paren{\log\varphi}_{z}\paren{\log\varphi}_{\bar z}}
=1 \;,
\end{equation*}
the vector field $\varphi^{-1/2}\vW$ has a unit length with respect to the complete metric $\omega_X$, so the corresponding real vector field $\RE \varphi^{-1/2}\vW=\varphi^{-1/2}(\vW+\overline{\vW})$ is complete.
Moreover
\begin{equation*}
(\RE \vW)\varphi  = \frac{-2i\varphi^{3/2}}{\varphi_{z}}\varphi_z  +\frac{2i\varphi^{3/2}}{\varphi_{\bar z}}\varphi_{\bar z} = 0 \;.
\end{equation*}
Hence it remains to show the completeness of $\vW$.
Take any integral curve $\gamma:\RR\to X$ of $\varphi^{-1/2}\RE\vW$. It satisfies 
\begin{equation*}
\paren{\varphi^{-1/2}(\RE \vW)}\circ \gamma = \dot\gamma
\end{equation*}
equivalently
\begin{equation*}
(\RE \vW)\circ \gamma = \paren{\varphi^{1/2}\circ\gamma} \dot\gamma 
\end{equation*}
The condition $(\RE \vW)\varphi\equiv 0$, equivalently $\varphi^{-1/2}(\RE \vW)\varphi\equiv 0$, implies that the curve $\gamma$ is on a level set of $\varphi$ so $\varphi^{1/2}\circ\gamma\equiv C$ for some constant $C$. The curve $\sigma:\RR\to X$ defined by $\sigma(t)=\gamma(Ct)$ satisfies
\begin{equation*}
(\RE \vW)\circ \sigma (t)  = (\RE \vW)(\gamma(Ct)) = C\dot\gamma(Ct) =\dot\sigma(t)
\end{equation*}
This means that $\sigma:\RR\to X$ is the integral curve of $\RE \vW$; therefore $\RE \vW$ is complete. This completes the proof.
\end{proof}


\section{Complete holomorphic vector fields on the unit disc}\label{sec:4}
In this section, we introduce parabolic and hyperbolic vector fields on the unit disc and discuss their relation to the model potential,
\begin{equation}\label{eqn:model0}
\varphi_0=\frac{\abs{1+z}^4}{\paren{1-\abs{z}^2}^2} 
\end{equation}
where it is $\varphi_\theta$ in \eqref{eqn:model} with $\theta=0$.
\subsection{Nowhere vanishing complete holomorphic vector fields from the left-half plane}
On the left-half plane $\HP=\{w\in\CC:\RE w<0\}$, there are two kinds of affine transformations:
\begin{equation*}
\dil_s(w)=e^{2s} w 
\quad\text{and}\quad
\tran_s(w)=w+2is 
\end{equation*}
for $s\in\RR$. Their infinitesimal generators are
\begin{equation*}
\dil= 2w\pd{}{w} 
\quad\text{and}\quad
\tran=2i\pd{}{w}
\end{equation*}
which are nowhere vanishing complete holomorphic vector fields of $\HP$. Note that
\begin{equation}\label{eqn:relation}
(\tran_s)_*\dil = 2(w-2is)\pd{}{w} = \dil-2s\tran \quad\text{and}\quad
(\tran_s)_*\tran =2i\pd{}{w}= \tran
\end{equation}
for any $s$. 

For the Cayley transform $F:\HP\to\Delta$ defined by
\begin{equation}\label{eqn:CT}
\begin{aligned}
F:\HP&\longrightarrow\Delta \\
w&\longmapsto z=\frac{1+w}{1-w} \;,
\end{aligned}
\end{equation}
we can take two nowhere vanishing complete holomorphic vector fields of $\Delta$:
\begin{equation*}
\Hype=F_*(\dil)=(z^2-1)\pd{}{z} 
\end{equation*}
and
\begin{equation*}
\Para=F_*(\tran)=i(z+1)^2\pd{}{z}\;.
\end{equation*}
When we define $\Hype_s=F\circ\dil_s\circ F^{-1}$ and $\Para_s=F\circ\tran_s\circ F^{-1}$, vector fields $\Hype$ and $\Para$ are infinitesimal generators of $\Hype_s$ and $\Para_s$, respectively. Moreover Equation~\eqref{eqn:relation} can be written by
\begin{equation}\label{eqn:relation'}
(\Para_s)_*\Hype = \Hype-2s\Para \quad\text{and}\quad
(\Para_s)_*\Para= \Para \;.
\end{equation}
There is another complete holomorphic vector field $\rot=iz\pdl{}{z}$ generating the rotational symmetry \begin{equation}\label{eqn:rotation}
\rot_s(z)=e^{is}z \;.
\end{equation} 
The holomorphic automorphism group of $\Delta$ is a real $3$-dimension connected Lie group (cf. see \cite{Cartan,Narasimhan}), we can conclude that any complete holomorphic vector field can be  a real linear combination of $\Hype$, $\Para$ and $\rot$. Since $\Hype(-1)=\Para(-1)=0$ and $\rot(-1)=-i\pdl{}{z}$, we have
\begin{lemma}\label{lem:lc}
If $\vW$ is a complete holomorphic vector field of $\Delta$ satisfying $\vW(-1)=0$, then there exist $a,b\in\RR$ with $\vW=a\Hype+b\Para$.
\end{lemma}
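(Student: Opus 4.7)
The plan is to use the already-stated classification of complete holomorphic vector fields on $\Delta$ coming from the fact that $\Aut(\Delta)$ is a connected real Lie group of dimension $3$. Since $\Hype$, $\Para$, and $\rot$ are complete holomorphic vector fields on $\Delta$ and are $\RR$-linearly independent (one checks this immediately from their explicit expressions, e.g.\ by looking at values at $z=0$), they form a basis of the Lie algebra $\aut(\Delta)$. Any complete holomorphic vector field $\vW$ therefore admits a unique decomposition
\begin{equation*}
\vW = a\Hype + b\Para + c\rot,\qquad a,b,c\in\RR.
\end{equation*}

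Next I would simply evaluate this expression at the boundary-fixed point $z=-1$. The paper has already recorded $\Hype(-1)=\Para(-1)=0$ and $\rot(-1)=-i\pd{}{z}$, so the hypothesis $\vW(-1)=0$ reduces to
\begin{equation*}
0 = a\cdot 0 + b\cdot 0 + c\cdot\paren{-i\pd{}{z}} = -ic\pd{}{z},
\end{equation*}
forcing $c=0$. This gives $\vW = a\Hype + b\Para$ as required.

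There is essentially no obstacle; the proof is a one-line linear-algebra computation once one accepts the parametrization of $\aut(\Delta)$ by the three explicit generators. The only subtle point worth flagging (but which I would not belabor) is that the classification of complete holomorphic vector fields as $\RR$-linear combinations of $\Hype,\Para,\rot$ really does rely on the fact alluded to in the text, namely that every such vector field integrates to a one-parameter subgroup of $\Aut(\Delta)$ and that $\dim_{\RR}\aut(\Delta)=3$.
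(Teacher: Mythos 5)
Your proof is correct and takes essentially the same route as the paper: the authors likewise invoke the fact that $\Aut(\Delta)$ is a $3$-dimensional connected Lie group to write every complete holomorphic vector field as a real linear combination $a\Hype+b\Para+c\rot$, and then conclude $c=0$ from $\Hype(-1)=\Para(-1)=0$ together with $\rot(-1)=-i\pd{}{z}\neq 0$.
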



\subsection{Hyperbolic vector fields} In this subsection, we will show that the hyperbolic vector field $\Hype$ can not be tangent to a K\"ahler potential with a constant length differential. 

By the simple computation,
\begin{equation*}
\Hype(\log\varphi_0)
	= (z^2-1)\frac{2(1+\bar z)}{(1+z)(1-\abs{z}^2)}
	=2\frac{\abs{z}^2+z-\bar z-1}{(1-\abs{z}^2)}
	\;,
\end{equation*}
we get 
\begin{equation*}
(\RE\Hype)\log\varphi_0 \equiv-4 \;.
\end{equation*}
 That means $\RE\Hype$ is nowhere tangent to $\varphi_0$. Moreover
\begin{lemma}\label{lem:hyperbolic}
Let $\varphi:\Delta\to\RR$ with $d\dc \log\varphi=2\omega_\Delta$ and $\norm{d\log\varphi}_{\omega_\Delta}^2\equiv 4$. If $(\RE\Hype) \log\varphi\equiv c$ for some $c$, then $c=\pm4$.
\end{lemma}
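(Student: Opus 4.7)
The plan is to differentiate the three pointwise constraints to obtain a single algebraic identity valid throughout $\Delta$, and then to evaluate it at the centre $z=0$, where $\Hype$ is nonzero.

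Set $h=\log\varphi$ and $a=\partial h/\partial z$. With $\lambda_\Delta=1/(1-|z|^2)^2$, the hypotheses \eqref{eqn:basic condition} together with $(\RE\Hype)h\equiv c$ become
\begin{equation*}
\partial_{\bar z}a = 2\lambda_\Delta,\qquad |a|^2 = 4\lambda_\Delta,\qquad (z^2-1)a+(\bar z^2-1)\bar a = c.
\end{equation*}
Since $h$ is real, $\partial_z\bar a=\overline{\partial_{\bar z}a}=2\lambda_\Delta$ is also real.

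First I would differentiate the middle identity in $z$; solving for $\partial_z a$ using $\partial_z\bar a=2\lambda_\Delta$ and $\bar a = 4\lambda_\Delta/a$ yields the first-order relation
\begin{equation*}
\partial_z a = \frac{2\bar z\,a}{1-|z|^2} - \frac{a^2}{2}.
\end{equation*}

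Next, I differentiate $(z^2-1)a+(\bar z^2-1)\bar a=c$ in $z$ and substitute the above formulas for $\partial_z a$ and $\partial_z\bar a$. A small simplification, in which the coefficient $z+\bar z(z^2-1)/(1-|z|^2)$ reduces to $(z-\bar z)/(1-|z|^2)$, produces the identity
\begin{equation*}
\frac{2a(z-\bar z)}{1-|z|^2} - \frac{a^2(z^2-1)}{2} + \frac{2(\bar z^2-1)}{(1-|z|^2)^2} = 0,
\end{equation*}
valid on all of $\Delta$.

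Finally, specializing to $z=0$ makes the first term vanish and collapses the remaining two to $a(0)^2=4$. Thus $a(0)=\pm 2$ is real, and the third original identity evaluated at $z=0$ reads $c=-a(0)-\overline{a(0)}=\mp 4$. Hence $c=\pm 4$.

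The argument is routine; the one point to remember is that $\partial_z\bar a$ is not zero but the real function $2\lambda_\Delta$, which is precisely what makes the specialization at $z=0$ determine $a(0)$ rather than being an identity.
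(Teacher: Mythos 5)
Your argument is correct, and I have checked the computations: the first-order relation $\partial_z a = \tfrac{2\bar z a}{1-|z|^2}-\tfrac{a^2}{2}$ follows from differentiating $|a|^2=4\lambda_\Delta$ and using $\partial_z\bar a = h_{z\bar z}=2\lambda_\Delta$ together with $\bar a = 4\lambda_\Delta/a$ (legitimate since $|a|^2=4\lambda_\Delta>0$ makes $a$ nowhere zero); the coefficient simplification to $2(z-\bar z)/(1-|z|^2)$ is right; and at $z=0$ the identity collapses to $a(0)^2=4$, which forces $a(0)\in\{2,-2\}$ and hence $c=-a(0)-\overline{a(0)}=\mp4$. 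This is, however, a genuinely different route from the paper's. The paper subtracts the reference potential $\log\varphi_0$ (which satisfies the same Monge--Amp\`ere-type equation), writes $\log\varphi=\log\varphi_0+f+\bar f$ with $f$ holomorphic, observes that the tangency condition forces $(z^2-1)f'$ to be constant, hence $f'=C/(z^2-1)$, and then feeds this into the global identity coming from $\norm{d\log\varphi}^2\equiv4$ to pin down $C\in\{0,4\}$ by comparing coefficients (taking $\partial\bar\partial$). Your approach never introduces $\varphi_0$ or the harmonic decomposition: you differentiate the raw constraints and evaluate the resulting algebraic identity at a single point, which is shorter and more self-contained for this lemma. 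What the paper's method buys in exchange is structural information --- it identifies $f'$ explicitly, and the same decomposition is the engine of the parabolic case (Lemma~\ref{lem:parabolic}), where one must conclude not merely the value of $c$ but that $\varphi=r\varphi_0$; your pointwise evaluation at $z=0$ would not by itself deliver that stronger conclusion.
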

\begin{proof}
Since $d\dc\log\varphi_0=2\omega_\Delta$ also, the function $\log\varphi-\log\varphi_0$ is harmonic; hence we may let $\log\varphi=\log\varphi_0+f+\bar f$ for some holomorphic function $f:\Delta\to\CC$. Then the condition $(\RE\Hype) \log\varphi\equiv c$ can be written by
\begin{equation}\label{eqn:basic identity}
(\RE\Hype)\log\varphi = -4+(z^2-1)f'+(\bar{z}^2-1)\bar f' \equiv c \;.
\end{equation}
This implies that $(z^2-1)f'$ is constant. Thus we can let
\begin{equation}\label{eqn:hmm}
f'=\frac{C}{z^2-1}
\end{equation}
for some $C\in\CC$.
Since
\begin{equation*}
\pd{}{z}\log\varphi=f'+\pd{}{z}\log\varphi_0
= f'+ \frac{2(1+\bar z)}{(1+z)(1-\abs{z}^2)}\;,
\end{equation*}
we have
\begin{multline*}
\norm{d\log\varphi}_{\omega_\Delta}^2=\paren{\pd{}{z}\log\varphi}\paren{\pd{}{\bar z}\log\varphi}\frac{1}{\lambda_\Delta}
\\
=\abs{f'}^2(1-|z|^2)^2
+\frac{2(1+\bar z)(1-\abs{z}^2)}{(1+z)}\bar f'
+\frac{2(1+z)(1-\abs{z}^2)}{(1+\bar z)}f'
+\norm{d\log\varphi_0}_{\omega_\Delta}^2 \;.
\end{multline*}
From the condition $\norm{d\log\varphi}_{\omega_\Delta}^2\equiv 4\equiv\norm{d\log\varphi_0}_{\omega_\Delta}^2$, it follows
\begin{equation*}
\abs{f'}^2(1-|z|^2)^2
=
-\frac{2(1+\bar z)(1-\abs{z}^2)}{(1+z)}\bar f'
-\frac{2(1+z)(1-\abs{z}^2)}{(1+\bar z)}f'
\;,
\end{equation*}
equivalently
\begin{equation}\label{eqn:identity}
\frac{1}{2}\abs{f'}^2(1-|z|^2)
=
-\frac{(1+\bar z)}{(1+z)}\bar f'
-\frac{(1+z)}{(1+\bar z)}f'
\;.
\end{equation}
Applying \eqref{eqn:hmm} to the right side above,
\begin{multline*}
-\frac{(1+\bar z)}{(1+z)}\bar f'
-\frac{(1+z)}{(1+\bar z)}f'
=\frac{(1+\bar z)}{(1+z)}\frac{\bar C}{1-\bar z^2}
+\frac{(1+z)}{(1+\bar z)}\frac{C}{1-z^2}
\\
=\frac{(1+\bar z-z-\abs{z}^2)\bar C + (1-\bar z+z-\abs{z}^2)C}{\abs{1-z^2}^2} \;.
\end{multline*}
Let $C=a+bi$ for $a,b\in\RR$, then
\begin{equation*}
(1+\bar z-z-\abs{z}^2)\bar C + (1-\bar z+z-\abs{z}^2)C
= 2a(1-\abs{z}^2) +2bi(z-\bar z) \;.
\end{equation*}
Now Equation~\eqref{eqn:identity} can be written by
\begin{equation*}
\frac{1}{2}\frac{\abs{C}^2}{\abs{z^2-1}^2}(1-|z|^2)
=\frac{2a(1-\abs{z}^2) +2bi(z-\bar z)}{\abs{1-z^2}^2}
\;,
\end{equation*}
so we have
\begin{equation*}
(\abs{C}^2-4a)(1-\abs{z}^2) =4bi(z-\bar z) 
\end{equation*}
on $\Delta$.
Take $\partial\bar\partial$ to above, we have
\begin{equation*}
\abs{C}^2-4a=0 \;.
\end{equation*}
Simultaneously $b=0$ so $C=a$. Now we have  $a^2=4a$. Such $a$ is $0$ or $4$. If $f'=4/(z^2-1)$, then $c=4$ from \eqref{eqn:basic identity}. If $f'=0$, then $c=-4$.
\end{proof}


\subsection{Parabolic vector fields}
Since
\begin{equation*}
\Para(\log\varphi_0) 
	= i(z+1)^2\frac{2(1+\bar z)}{(1+z)(1-\abs{z}^2)}
	=2i\frac{\abs{1+z}^2}{1-\abs{z}^2}
	\;,
\end{equation*}
we have
\begin{equation*}
(\RE\Para)\log\varphi_0 \equiv 0 \;.
\end{equation*}
That means that the parabolic vector field $\Para$ is tangent to $\varphi_0$. The vector field $\Para$ is indeed the nowhere vanishing complete holomorphic vector field as constructed in Theorem~\ref{thm:existence} corresponding to $\varphi_0$. The main result of this section is the following.

\begin{lemma}\label{lem:parabolic}
Let $\varphi:\Delta\to\RR$ with $d\dc \log\varphi=2\omega_\Delta$ and $\norm{d\log\varphi}_{\omega_\Delta}^2\equiv 4$. If $(\RE\Para) \log\varphi\equiv c$ for some $c$, then $c=0$ and $\varphi=r\varphi_0$ for some $r>0$.
\end{lemma}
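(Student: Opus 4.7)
The plan is to mimic the proof of Lemma~\ref{lem:hyperbolic}, exploiting the observation (computed just above) that $(\RE\Para)\log\varphi_0\equiv 0$. Since $d\dc\log\varphi=d\dc\log\varphi_0=2\omega_\Delta$ and $\Delta$ is simply connected, I can write $\log\varphi=\log\varphi_0+f+\bar f$ for some holomorphic $f:\Delta\to\CC$.

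Substituting into the hypothesis $(\RE\Para)\log\varphi\equiv c$ and cancelling the $\varphi_0$ contribution, the condition collapses to
\begin{equation*}
2\RE\!\paren{i(z+1)^2 f'(z)}\equiv c.
\end{equation*}
The holomorphic function $h(z):=i(z+1)^2 f'(z)$ therefore has constant real part $c/2$, and by Cauchy-Riemann it must itself be a complex constant; write $h\equiv c/2+ib$ with $b\in\RR$. Equivalently, $f'(z)=\alpha/(z+1)^2$ where $\alpha=b-ic/2$.

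Next I would feed this $f'$ into the identity~\eqref{eqn:identity}, which was derived in the course of Lemma~\ref{lem:hyperbolic} using only the constant-length hypothesis $\norm{d\log\varphi}_{\omega_\Delta}^2\equiv 4$. A direct calculation collapses the right-hand side to $-(\alpha+\bar\alpha)/\abs{1+z}^2=-2b/\abs{1+z}^2$ and the left-hand side to $\abs{\alpha}^2(1-\abs{z}^2)/(2\abs{z+1}^4)$. Clearing denominators gives
\begin{equation*}
\abs{\alpha}^2(1-\abs{z}^2)\equiv -4b\abs{1+z}^2 \quad\text{on }\Delta.
\end{equation*}
Because $1-\abs{z}^2$ and $\abs{1+z}^2$ are linearly independent over $\RR$ (evaluating at $z=0$ and $z=-\tfrac{1}{2}$ yields the system $\abs{\alpha}^2=-4b$ and $3\abs{\alpha}^2=-4b$), this forces $\abs{\alpha}^2=0$ and $b=0$; in particular $c=0$ and $f'\equiv 0$.

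Hence $f$ is a complex constant, $\log\varphi-\log\varphi_0$ is a real constant, and $\varphi=r\varphi_0$ for some $r>0$. The only step that requires real care is the substitution into~\eqref{eqn:identity}, but it is routine and actually cleaner than in Lemma~\ref{lem:hyperbolic}: the parabolic tangency hypothesis pins $h$ down to a bona fide constant immediately, whereas the hyperbolic hypothesis only yielded $f'=C/(z^2-1)$ and left a one-complex-parameter family of candidate $C$'s to be eliminated.
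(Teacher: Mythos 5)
Your proposal is correct and follows essentially the same route as the paper: the same decomposition $\log\varphi=\log\varphi_0+f+\bar f$, the same conclusion $f'=\alpha/(z+1)^2$ from the tangency hypothesis, and the same substitution into \eqref{eqn:identity} leading to $\abs{\alpha}^2(1-\abs{z}^2)\equiv-4\,(\RE\alpha)\abs{1+z}^2$ and hence $\alpha=0$. The only cosmetic difference is that you extract the two linear relations by evaluating at $z=0$ and $z=-\tfrac{1}{2}$, where the paper evaluates at $z=0$ and applies $\partial\bar\partial$.
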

\begin{proof}
By the same way in the proof of Lemma~\ref{lem:hyperbolic}, we let $\log\varphi=\log\varphi_0+f+\bar f$ for some holomorphic $f:\Delta\to\CC$. Since
\begin{equation}\label{eqn:basic identity1}
(\RE\Para)\log\varphi = i(z+1)^2f'-i(\bar{z}+1)^2\bar f' \equiv c
\end{equation}
it follows  that $(z+1)^2f'$ is constant. Thus we have
\begin{equation}\label{eqn:hmm1}
f'=\frac{C}{(z+1)^2}
\end{equation}
for some $C\in\CC$.
Since \eqref{eqn:identity} also holds, we can apply \eqref{eqn:hmm1} to the right side of \eqref{eqn:identity} to get
\begin{multline*}
-\frac{(1+\bar z)}{(1+z)}\bar f'
-\frac{(1+z)}{(1+\bar z)}f' 
=
-\frac{(1+\bar z)}{(1+z)}\frac{\bar C}{(\bar z+1)^2}
-\frac{(1+z)}{(1+\bar z)}\frac{C}{(z+1)^2}
\\
=\frac{-\bar C}{\abs{1+z}^2}
+\frac{-C}{\abs{1+z}^2}
=\frac{-\bar C -C}{\abs{1+z}^2}
\end{multline*}
Now Equation \eqref{eqn:identity} is can be written by
\begin{equation*}
\frac{\abs{C}^2}{\abs{z+1}^4}(1-|z|^2)=2\frac{-\bar C -C}{\abs{1+z}^2} 
\end{equation*}
equivalently
\begin{equation*}
\abs{C}^2(1-|z|^2) =-\paren{2\bar C +2C}\abs{1+z}^2\;.
\end{equation*}
Evaluating $z=0$, we have $\abs{C}^2=-2\bar C-2C$. And taking
 $\partial\bar\partial$ to above, we have $-\abs{C}^2=-2\bar C-2C$. It follows that $C=0$ so $f$ is constant. Moreover Equation \eqref{eqn:basic identity1} implies that $c=0$.
\end{proof}


\section{Proof of the main theorem}\label{sec:5}

Now we prove Theorem~\ref{thm:main thm} and Corollary~\ref{cor:main cor}

\bigskip

\noindent\textit{Proof of Theorem~\ref{thm:main thm}.} Let $\varphi:\Delta\to\RR$ be a function with
\begin{equation*}
d\dc\log\varphi=2\omega_\Delta \quad\text{and}\quad
\norm{d\log\varphi}_{\omega_\Delta}^2\equiv 4
\;.
\end{equation*}
By Theorem~\ref{thm:existence}, we can take a nowhere vanishing complete holomorphic vector field $\vW$ with $(\RE \vW)\varphi\equiv0$. Since every automorphism of $\Delta$ has at least one fixed point on $\overline\Delta$ and $\vW$ is nowhere vanishing on $\Delta$, any nontrivial automorphism generated by $\RE \vW$ has no fixed point in $\Delta$ and should have a common fixed point $p$ at the boundary $\partial\Delta$. This means $p$ is a vanishing point of $\vW$. Consider a rotational symmetry $\rot_\theta$ in \eqref{eqn:rotation} satisfying  $\rot_\theta(-1)=p$. We will show that $\varphi\circ\rot_\theta=r\varphi_0$ where $\varphi_0$ is as in \eqref{eqn:model0} and $r>0$. This implies that $\varphi=r\varphi_{-\theta}$.

Now we can simply denote by $\varphi=\varphi\circ\rot_\theta$ and $\vW=(\rot_\theta^{-1})_*\vW$.  Since $-1$ is a vanishing point of $\vW$, Lemma~\ref{lem:lc} implies
\begin{equation*}
\vW=a\Hype+b\Para
\end{equation*}
for some real numbers $a$, $b$. 

Suppose that $a\neq0$. Equation~\eqref{eqn:relation'} implies that
\begin{equation*}
(\Para_s)_* \vW  = (\Para_s)_*(a\Hype+b\Para) 
= a\Hype-2as\Para+b\Para 
= a\Hype+(b-2as)\Para\;.
\end{equation*}
Take $s=b/2a$, then $\widetilde\vW=(\Para_s)_* \vW =a\Hype$. Let $\tilde\varphi=\varphi\circ\Para_{-s}$ for this $s$. Then $\tilde\varphi$ satisfies conditions in Theorem~\ref{thm:main thm} and $(\RE \widetilde\vW)\tilde\varphi\equiv 0$. But Lemma~\ref{lem:hyperbolic} said that $(\RE \widetilde\vW)\tilde\varphi=a(\RE \Hype)\tilde\varphi\equiv \pm4a\tilde\varphi$. It  contradicts to $(\RE \vW)\varphi\equiv 0$ equivalently $(\RE \widetilde\vW)\tilde\varphi\equiv 0$. Thus $a=0$.

Now $\vW=b\Para$. Since $\vW$ is nowhere vanishing already, $b\neq 0$. The condition $(\RE\vW)\varphi\equiv0$ implies $(\RE\Para)\varphi\equiv 0$. Lemma~\ref{lem:parabolic} says that $\varphi=r\varphi_0$ for some positive $r$. This completes the proof. \qed

\bigskip

\noindent\textit{Proof of Corollary~\ref{cor:main cor}.}
Let $D$ be a simply connected proper domain in $\CC$ and let $\omega_D=i\lambda_D dz \wedge d\bar z$ be its Poincar\'e metric with $\norm{d\log \lambda_D}_{\omega_D}^2\equiv 4$. By Theorem~\ref{thm:existence}, there is a nowhere vanishing complete holomorphic vector field $\vW$ with $(\RE\vW)\lambda_D\equiv0$. Take a biholomorphism $G:\Delta\to D$ and let
\begin{equation*}
\varphi=\lambda_D\circ G
\quad\text{and}\quad
\mathcal{Z}=(G^{-1})_*\vW \,.
\end{equation*}
Note that $(\RE\mathcal{Z})\varphi\equiv0$ by assumption. Using the rotational symmetry $\rot_\theta$ of $\Delta$ which is also affine, we may assume that $\mathcal{Z}(-1)=0$ and we will prove that $G$ is a Cayley transform. 

Since $G:(\Delta,\omega_\Delta)\to(D,\omega_D)$ is an isometry, we have $G^*\omega_D=\omega_\Delta$, equivalently 
\begin{equation*}
\varphi=\frac{\lambda_\Delta}{\abs{G'}^2} \;.
\end{equation*}
Moreover $d\log\varphi=d(G^*\log\lambda_D)$ implies that $\norm{d\log\varphi}_{\omega_D}^2=\norm{d(G^*\log\lambda_D)}_{\omega_D}^2\equiv4$. By Theorem~\ref{thm:main thm}, we have
\begin{equation*}
\frac{\lambda_\Delta}{\abs{G'}^2}=\varphi=r\varphi_0=r\lambda_\Delta \abs{1+z}^4 
\end{equation*}
for some positive $r$. This means that $G'=e^{i\theta'}/\sqrt{r}(1+z)^2$ for some $\theta'\in\RR$ so that
\begin{equation*}
G=\frac{e^{i\theta'}}{2\sqrt{r}}\frac{z-1}{z+1}+C
\end{equation*}
Since the function $z\mapsto (z-1)/(z+1)$ is the inverse mapping of the Cayley transform $F:\HP\to\Delta$ in \eqref{eqn:CT}, we have
\begin{align*}
G\circ F:\HP&\to D \\
z&\mapsto  \frac{e^{i\theta'}}{2\sqrt{r}}z+C \;.
\end{align*}
This implies that $D=G(F(\HP))$ is affine equivalent to $\HP$. \qed


\end{document}